\documentclass[10pt,reqno]{amsart}

\pdftrailerid{}
\usepackage[T1]{fontenc}
\usepackage{lmodern}
\usepackage[margin=1.08in]{geometry}
\usepackage{microtype}
\usepackage{amsmath,amssymb,mathtools,mathrsfs}
\usepackage{enumitem}
\usepackage[hidelinks]{hyperref}
\hypersetup{
  pdftitle={Entire Functions Mapping Countable Dense Subsets of R
    onto Countable Dense Subsets of C},
  pdfauthor={Sina Nadi},
  pdfsubject={Entire Functions Mapping Countable Dense Sets},
  pdfcreator={},
  pdfproducer={},
  pdfkeywords={},
}

\setlist[enumerate]{label=(\roman*),leftmargin=2.15em,itemsep=0.15em,topsep=0.35em}
\setlist[itemize]{leftmargin=1.8em,itemsep=0.15em,topsep=0.35em}
\numberwithin{equation}{section}

\newtheorem{theorem}{Theorem}[section]
\newtheorem{lemma}[theorem]{Lemma}
\newtheorem{proposition}[theorem]{Proposition}
\newtheorem{corollary}[theorem]{Corollary}
\theoremstyle{remark}

\newcommand{\C}{\mathbb C}
\newcommand{\R}{\mathbb R}
\newcommand{\Q}{\mathbb Q}
\newcommand{\dist}{\operatorname{dist}}
\newcommand{\normone}[2]{\lVert #1\rVert_{C^1(#2)}}

\newcommand{\Real}[1]{\mathit{Re}\{#1\}}
\newcommand{\lemref}[1]{\hyperref[#1]{Lemma~\ref*{#1}}}
\newcommand{\thmref}[1]{\hyperref[#1]{Theorem~\ref*{#1}}}
\newcommand{\corref}[1]{\hyperref[#1]{Corollary~\ref*{#1}}}

\title[Entire functions mapping countable dense sets]
{Entire Functions Mapping Countable Dense Subsets of $\mathbb R$\\
onto Countable Dense Subsets of $\mathbb C$}
\author[Sina Nadi]{Sina Nadi}
\date{}

\begin{document}

\begin{abstract}
In 1977, Karl F. Barth posed the following problem: given countable dense
sets $A\subset\R$ and $B\subset\C$, does there exist a transcendental entire
function $f$ such that
\[
    f(A)=B
    \qquad\text{and}\qquad
    f(\R\setminus A)\subset\C\setminus B?
\]
We review results related to this question and prove that there exist
transcendental entire functions $f$ such that
$f\!\restriction_A\colon A\to B$ is bijective,
$f^{-1}(B)\cap\R=A$, and
$f'(a)\neq0$ for every $a\in A$.  In fact, the set of such functions has
the cardinality of the continuum.  At the end, we give two extensions of the
result, one for countably many pairwise disjoint pairs of dense sets and one
with $\R$ replaced by a closed unbounded subset of $\C$ of planar Lebesgue
measure zero.
\end{abstract}

\maketitle

\section{Introduction}

The arithmetic behavior of transcendental entire functions has long been studied in
complex analysis and transcendence theory.  A related problem in real and complex
analysis asks how an entire function can act on countable dense subsets of $\R$ and
$\C$.

In 1886, Strauss considered whether a transcendental analytic function could take
rational values at every rational point of its domain.  He tried to prove that this
was impossible.  Weierstrass answered Strauss in the same year by giving a
transcendental entire function that takes rational values at all rational points.  He
also suggested that a transcendental entire function could take algebraic values at
every algebraic point.  St\"ackel recorded this exchange in his 1895 paper \cite{Stackel1895}.
Mahler later discussed it in his 1976 lectures \cite[p.~53]{Mahler1976}.

In 1895, Cantor proved that every countable dense linear order without endpoints
has the same order type as the rational numbers.  It follows that any two countable
dense subsets of $\R$ are related by an order-preserving bijection
\cite{Cantor1895}.  In the same year, St\"ackel proved in \cite{Stackel1895} that if $X\subset\C$ is
countable and $Y\subseteq\C$ is dense, then there exists a transcendental entire
function $f$ such that
\[
    f(X)\subseteq Y.
\]

In 1902, St\"ackel constructed a transcendental function that is analytic in a
neighborhood of the origin and has a local inverse there.  He arranged that both
the function and its local inverse take algebraic values at every algebraic point
for which they are defined \cite{Stackel1902}.

In 1925, Franklin started from Cantor's theorem and considered countable dense
subsets $A$ and $B$ of two open real intervals $I$ and $J$.  He stated that there
exists a real-analytic function $f$ from $I$ onto $J$ such that
\[
    f'(x)>0\qquad(x\in I)
\]
and
\[
    f(A)=B.
\]
Thus $f$ is an increasing bijection from $I$ onto $J$, and its restriction to $A$
is an order-preserving bijection from $A$ onto $B$ \cite{Franklin1925}.

In 1957, Erd\H{o}s posed Problem~24 in \cite{Erdos1957}.  He first asked whether there exists an entire
function $f$, not of the form $a_0+a_1z$, such that for every real number $x$,
\[
    f(x)\in\Q
    \quad\Longleftrightarrow\quad
    x\in\Q.
\]
He then asked whether, for two denumerable dense sets $A$ and $B$, there exists an
entire function that maps $A$ onto $B$.  The second question did
not specify whether the two sets were dense in the real line or in the complex
plane.

In 1963, Neumann and Rado answered the first part of Erd\H{o}s's problem.  They
constructed a transcendental entire function that takes rational values at rational
real points and irrational values at irrational real points
\cite{NeumannRado1963}.

In 1967, Maurer answered the complex-plane interpretation of the second part.  For
any countable dense sets $A,B\subset\C$, he constructed a transcendental entire
function $f$ whose restriction to $A$ is a bijection from $A$ onto $B$
\cite{Maurer1967}.

In 1970, Barth and Schneider answered the real-line interpretation
\cite{BarthSchneider1970}.  For any countable dense sets $A,B\subset\R$, they
constructed a transcendental entire function $f$ such that
\[
    f(z)\in B
    \quad\Longleftrightarrow\quad
    z\in A.
\]
They also arranged that the restriction of $f$ to $\R$ is a strictly increasing
homeomorphism of $\R$ onto itself and that
\[
    f(A)=B.
\]

In 1972, Barth and Schneider strengthened Maurer's theorem.  For any countable
dense sets $A,B\subset\C$, they constructed an entire function $f$ such that
\[
    f(z)\in B
    \quad\Longleftrightarrow\quad
    z\in A,
\]
and
\[
    f(A)=B,
    \qquad
    f(\C\setminus A)=\C\setminus B.
\]
They noted that this result does not imply their 1970 theorem because it does not
ensure that $f$ maps the real line onto itself \cite{BarthSchneider1972}.

In 1974, Sato and Rankin gave another solution of the real-line problem.  They
proved in \cite{SatoRankin1974} that for any countable dense sets
$A,B\subset\R$, there exists a transcendental entire function $f$ whose
restriction to $\R$ is a real-valued strictly increasing surjection and satisfies
\[
    f(A)=B.
\]

In 1976, Nienhuys and Thiemann proved the following result in
\cite{NienhuysThiemann1976}.  Let $S,T\subset\R$ be countable and dense.  Let
$p$ be a positive continuous function on $[0,\infty)$ such that
\[
    \lim_{t\to\infty}t^{-n}p(t)=\infty
\]
for every positive integer $n$, and let $f_0$ be an entire function whose
restriction to $\R$ is real-valued and nondecreasing.  They proved that there
exists an entire function $f$ whose restriction to $\R$ is strictly increasing,
such that
\[
    f(S)=T
\]
and
\[
    |f(z)-f_0(z)|<p(|z|)\qquad(z\in\C).
\]

In 1976, Mahler returned to St\"ackel's 1902 result.  He asked whether there exists
a transcendental entire function with rational Taylor coefficients such that both
the image and the preimage of the field of algebraic numbers are contained in the
field of algebraic numbers \cite[p.~53]{Mahler1976}.

In 1977, Anderson, Barth, and Brannan asked whether, for countable dense sets
$A\subset\R$ and $B\subset\C$, there exists a transcendental entire function
$f$ that maps $A$ onto $B$ and maps $\R\setminus A$ into $\C\setminus B$.
They attributed the problem to Barth \cite{AndersonBarthBrannan1977}.

In 2009, Burke strengthened the real-line theorem.  Let $A,B\subset\R$ be
countable and dense.  Let $u$ be a nondecreasing $C^n$ surjection from $\R$ onto
itself, and let $\varepsilon$ be a positive continuous function on $\R$.  Burke
proved that one can choose an entire function $f$, real-valued and increasing on
$\R$, such that
\[
    f(A)=B
\]
and
\[
    \left|f^{(k)}(x)-u^{(k)}(x)\right|<\varepsilon(x)
\]
for $0\leq k\leq n$ and $x\in\R$ \cite{Burke2009JMAA}.

In 2017, Marques and Moreira answered Mahler's 1976 question.  They proved that
there are uncountably many transcendental entire functions with rational Taylor
coefficients for which the image and the preimage of the field of algebraic numbers
are contained in that field \cite{MarquesMoreira2017}.

In 2019, Gauthier revisited the real-line problem.  He pointed out that Franklin's
proof used the false assertion that a uniformly convergent sequence of analytic
functions on a real interval must have an analytic limit.  Franklin's conclusion
remains valid because it follows from Burke's 2009 theorem.  Gauthier also proved
that, for any countable dense sets $A,B\subset\R$, the function may be chosen to be
entire of finite order, to satisfy
\[
    f(\R)=\R,
    \qquad
    f'(x)>0\qquad(x\in\R),
\]
and to restrict to an order-preserving bijection from $A$ onto $B$
\cite{Gauthier2019}.

In 2019, Hayman and Lingham recorded the following problem, due to Karl F.
Barth, as Problem~2.48 in the fiftieth-anniversary edition of
\emph{Research Problems in Function Theory} \cite{HaymanLingham2019}.

\medskip
\noindent\textbf{Barth's problem.}
Let $A\subset\R$ and $B\subset\C$ be countable dense sets.  Does there exist a
transcendental entire function $f$ that maps $A$ onto $B$ and maps
$\R\setminus A$ into $\C\setminus B$?
\medskip

The two conditions in Barth's problem may be written as
\[
    f(A)=B
\]
and
\[
    f^{-1}(B)\cap\R=A.
\]

Earlier in 2019, Marques and Moreira had proved a strengthened version of their
2017 theorem.  Their main theorem concerns countable dense subsets of $\C$
satisfying certain symmetry conditions and entire functions with arithmetic
restrictions on their coefficients.  In Remark~1 of \cite{MarquesMoreira2019},
they state that when arbitrary complex coefficients are allowed, the symmetry
conditions can be removed.  More precisely, they state that for arbitrary
countable dense sets $X,Y\subset\C$, there are uncountably many transcendental
entire functions $g$
such that
\[
    g(X)=Y,
    \qquad
    g^{-1}(Y)=X,
    \qquad
    g'(\alpha)\neq0
    \quad(\alpha\in X).
\]

Remark~1 of Marques and Moreira gives part of the conclusion required in
Barth's problem.  Choose a countable set
\[
    D\subset\C\setminus\R
\]
that is dense in $\C$, and set
\[
    X=A\cup D,
    \qquad
    Y=B.
\]
Their result gives a transcendental entire function $g$ satisfying
\[
    g(A\cup D)=B
\]
and
\[
    g^{-1}(B)=A\cup D.
\]
Since $D\cap\R=\varnothing$, we obtain
\[
    g^{-1}(B)\cap\R=A.
\]
It follows that
\[
    g(\R\setminus A)\subset\C\setminus B
\]
and
\[
    g(A)\subseteq B.
\]
This does not imply that $g(A)=B$.  Some elements of $B$ may be attained only at
points of $D$.  Thus the result of Marques and Moreira gives the required real
preimage condition, but it does not give the surjectivity of
$g\!\restriction_A$ onto $B$.  The purpose of the present paper is to prove that
both requirements in Barth's problem can always be satisfied simultaneously.  We
also extend the result to countably many pairwise disjoint pairs of dense sets
and to closed unbounded subsets of $\C$ of planar Lebesgue measure zero.

\medskip

Our results give a stronger answer.

\begin{theorem}\label{thm:main}
Let $A\subset\R$ and $B\subset\C$ be countable dense sets.  There exists a
transcendental entire function $f$ such that
\begin{enumerate}
    \item $f\!\restriction_A\colon A\to B$ is a bijection;
    \item $f^{-1}(B)\cap\R=A$.
\end{enumerate}
Equivalently, every $b\in B$ has exactly one real preimage, and the collection of
those preimages is $A$.
\end{theorem}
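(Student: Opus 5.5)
We build $f$ as the limit of polynomials $f_n = f_{n-1} + \varepsilon_n P_n$, using a back-and-forth enumeration in the style of Maurer and of Barth--Schneider. Enumerate $A=\{a_1,a_2,\dots\}$ and $B=\{b_1,b_2,\dots\}$. At stage $n$ we keep finite sets $A_n\subset A$ and $B_n\subset B$ together with a bijection $\sigma_n\colon A_n\to B_n$, and an entire function $f_n$ with:
\begin{itemize}
\item $f_n(a)=\sigma_n(a)$ and $f_n'(a)\neq0$ for $a\in A_n$;
\item $f_n(a)\neq f_n(a')$ whenever $a\ne a'$ in $A_n$.
\end{itemize}
The correction has the form $P_n(z)=Q_n(z)\prod_{a\in A_{n-1}}(z-a)^2$. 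Multiplying by the squared product keeps both the values $f_n(a)$ and the derivatives $f_n'(a)$ fixed at all previously handled points. The small constants $\varepsilon_n$ are chosen so that $\sum_n \varepsilon_n\sup_{|z|\le n}|P_n|<\infty$, which makes $f=\lim f_n$ entire. They also make $f$ transcendental, for instance by forcing $f$ to match $e^z$ closely in a growing fashion, or by noting that infinitely many nonzero corrections of increasing degree suffice.

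The stages alternate between two moves. In a \emph{forth} step we take the first $a\in A\setminus A_n$. Since $f_n$ is nonconstant and open and $B$ is dense, we can perturb so that $f_{n+1}(a)$ is some $b\in B\setminus B_n$ chosen very close to $f_n(a)$. In a \emph{back} step we take the first $b\in B\setminus B_n$ and need a new real point $a\in A\setminus A_n$ with $f_{n+1}(a)=b$. The difficulty is that $f_n(\mathbb R)$ is typically a curve that misses $b$. The added polynomial term has to bend the image of a small real interval so that it passes through $b$. We do this with $P_{n}$ built from an interpolation polynomial: choose a fresh $a\in A$ far from $A_n$, and let $Q$ interpolate so that $f_n(a)+\lambda P(a)=b$ with $\lambda$ not small. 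To keep the correction small we instead first move $f_n$, via earlier small perturbations, so that $f_n(a)$ is already near $b$. Concretely, we first arrange approximately that $f_n$ maps some real interval far out onto a curve whose image is dense-ish. A cleaner route is to choose at the outset a base function $f_0$, for example $f_0(x)=e^{x}\cdot$(a function whose restriction to $[N,\infty)$ has image winding densely, such as $x\mapsto \sum$ of lacunary exponentials). Then for any $b$ and any $\delta$ there is a real $x$ arbitrarily far out with $|f_0(x)-b|<\delta$ and $f_0'(x)\ne0$. Small corrections preserve this up to $\delta$, and a tiny local correction then hits $b$ exactly at a nearby point $a\in A$, using density of $A$ and nonvanishing of the derivative.

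Condition (ii) is the main obstacle: for real $x\notin A$ we need $f(x)\notin B$. We handle it as in Barth--Schneider. Enumerate the pairs $(b_j,\text{compact interval }[-k,k])$. At stage $n$ we require that the real zeros of $f_n-b_j$ in $[-n,n]$, for $j\le n$, all lie in $A_n$ and are simple. This is generic: $f_n-b_j$ has finitely many zeros in $[-n,n]$, and a small perturbation, with the values at $A_{n-1}$ fixed, moves each non-$A$ zero off $\mathbb R$ or onto a point of $A$. Here we use that $B$ is countable, so the real-valued case $b_j\in\mathbb R$ needs a zero at a point of $A$ (absorbed as a forth step), while nonreal $b_j$ can be avoided entirely. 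Hurwitz's theorem then gives the uniform control. The $\varepsilon_n$ are taken so small that no new zeros enter in the limit, and the zeros that are kept stay exactly at points of $A$. Injectivity of $f\restriction_A$ is preserved the same way, since distinct points of $A$ are assigned distinct elements of $B$ and nothing else maps into $B$ by (ii). The delicate bookkeeping is choosing each $\varepsilon_{n+1}$ below the Hurwitz thresholds of all previous stages.
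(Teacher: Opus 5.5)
Your proposal has a genuine gap in the back step, and its clearing step rests on incorrect reasoning.

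\textbf{The back step.} This is exactly where Barth's problem goes beyond the result of Marques and Moreira, and it is not established. Your argument needs $f_n(\R)$ to come within $\delta$ of $b$, and you get this from density of $f_0(\R)$ via the claim that ``small corrections preserve this up to $\delta$''. That claim fails. Your corrections are small only on the discs $|z|\le n$. The partial sum $\sum_{k\le n}\varepsilon_kP_k$ is a polynomial, in general nonconstant, so it is unbounded on $\R$. Meanwhile the points where $f_0(x)$ is close to $b$ lie arbitrarily far out. Adding a polynomial can destroy density of the real image near $b$, and you name no property of $f_0$ that survives this.

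The idea you dismissed (``choose a fresh $a$ far from $A_n$ \ldots\ with $\lambda$ not small'') is the one that works, because $\lambda$ \emph{is} small once degrees are used. Keep every $f_n$ a polynomial, starting from $f_0\equiv0$. Choose $a\in A\setminus A_n$ with $|a|$ very large, and add $c\,H(z)z^M$, where $H$ vanishes on $A_n$ (to second order if you want to freeze derivatives) and $\deg H+M>\deg f_n$. Then $c=(b-f_n(a))/(H(a)a^M)\to0$ as $|a|\to\infty$ along the unbounded set $A$. Hence the correction is as small as you like in $C^1$ on any fixed compact set, even though it moves the value at $a$ by a large amount. Only smallness on $D_n$ matters; this is \lemref{lem:add-target}. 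No base function with dense real image is needed.

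\textbf{The clearing step.} The $f_n$ are complex-valued on $\R$, so real zeros of $f_n-b_j$ are not topologically protected when $b_j\in\R$. Your assertion that a real $b_j$ ``needs a zero at a point of $A$'' is wrong. Acting on it would also destroy injectivity whenever $b_j$ already has its assigned preimage. ``This is generic'' needs an actual argument, and the paper supplies one. In the family $q+tH_S$ with $t\in\C$, the bad parameters are those producing a point of $E$-preimage in $I\setminus S$. They form the set $\bigcup_{e\in E}\Phi_e(I\setminus S)$ with $\Phi_e=(e-q)/H_S$. This is a countable union of $C^1$ images of compact intervals, so it has planar measure zero. Removing also the finitely many $t$ that kill a derivative at $S$, a single small $t$ clears all targets at once, real or not.

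\textbf{Transcendence.} ``Infinitely many nonzero corrections of increasing degree'' do not imply transcendence, since the limit's high-order coefficients can cancel. You need nothing extra here, however. If $f$ were a polynomial, $f(\R)$ would be closed and of planar measure zero, yet it would contain the dense set $B$.
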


We prove the theorem in the next section.  We also prove that the functions
may be chosen so that $f'(a)\neq0$ for every $a\in A$, and the set of such
functions has the cardinality of the continuum.  Two extensions of the theorem
are given at the end of the section.

\section{Proof of the theorem}

For every finite set $S\subset\R$, define
\[
    H_S(z):=\prod_{s\in S}(z-s)\in\C[z].
\]
We adopt the conventions $H_\varnothing\equiv1$ and $\deg 0=-\infty$, where $0$ denotes the zero polynomial and $\deg$ denotes the degree of a polynomial.  For every nonempty compact
set $K\subset\C$ and every entire function $h$, define
\[
    \normone{h}{K}
      :=\sup_{z\in K}|h(z)|+\sup_{z\in K}|h'(z)|.
\]

For $a\in\C$ and $r>0$, let
$D(a,r):=\{z\in\C:|z-a|<r\}$ and
$\overline D(a,r):=\{z\in\C:|z-a|\leq r\}$.

\begin{lemma}\label{lem:add-source}
Let $p\in\C[z]$, let $S\subset\R$ be finite, let $a\in\R\setminus S$, and
let $T\subset\C$ be finite.  Assume that $p(S)\subseteq T$.  Given a nonempty
compact set $K\subset\C$ and $\varepsilon>0$, there exist
$b\in B\setminus T$ and $q\in\C[z]$ such that
\begin{enumerate}
    \item $q(s)=p(s)$ for every $s\in S$;
    \item $q(a)=b$;
    \item $\normone{q-p}{K}<\varepsilon$.
\end{enumerate}
\end{lemma}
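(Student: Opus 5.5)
We look for $q$ of the form
\[
    q(z)=p(z)+c\,H_S(z)\,L(z),
\]
with $L\in\C[z]$ and $c\in\C$ to be chosen. Any such $q$ satisfies (i), because $H_S$ vanishes on $S$. The work is to make $q(a)$ a point of $B\setminus T$ while keeping $q-p$ small in $C^1$ norm on $K$.

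\emph{The simplest choice, $L\equiv1$.} Put $q_c(z)=p(z)+cH_S(z)$. Then
\[
    q_c(a)=p(a)+cH_S(a),
\]
and $H_S(a)\neq0$ because $a\notin S$. Hence $c\mapsto q_c(a)$ is an affine bijection of $\C$. Also
\[
    \normone{q_c-p}{K}=|c|\,\normone{H_S}{K},
\]
and $M:=\normone{H_S}{K}$ is finite and positive, since $K$ is compact and nonempty and $H_S\not\equiv0$.

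\emph{Choosing $b$ near $p(a)$.} We need $b\in B\setminus T$ with
\[
    |b-p(a)|<\varepsilon\,|H_S(a)|/M.
\]
Such a $b$ exists: $B$ is dense in $\C$ and $T$ is finite, so $B\setminus T$ is still dense. Every nonempty open disc meets $B$ in infinitely many points, since a dense set in $\C$ cannot meet an open disc in only finitely many points without missing a smaller disc. So the disc $D\bigl(p(a),\varepsilon|H_S(a)|/M\bigr)$ contains a point $b\in B\setminus T$.

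\emph{Conclusion.} Set $c:=(b-p(a))/H_S(a)$ and $q:=q_c$. Then $q(a)=b$, which is (ii), and
\[
    \normone{q-p}{K}=|c|M<\varepsilon,
\]
which is (iii). Property (i) holds as noted above. The hypothesis $p(S)\subseteq T$ is not needed for this argument; it only records the setting in which the lemma will be applied.

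There is no real obstacle. The one point to check is that $B\setminus T$ meets an arbitrarily small disc around $p(a)$, and this follows from the density of $B$ together with the finiteness of $T$. If the later argument needs extra conditions, such as $q'(a)\neq0$, $q$ real on part of $\R$, or control of derivatives at $S$, we would replace $H_S$ by $H_S^2$ or multiply by a suitable polynomial $L$. We would then use the remaining one-parameter freedom, since the condition $q'(a)\neq0$ excludes at most one value of $c$. The lemma as stated does not require any of this.
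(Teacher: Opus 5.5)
Your proof is correct and is the paper's argument: perturb by $cH_S$ with $c=(b-p(a))/H_S(a)$, where $b\in B\setminus T$ is chosen within $\varepsilon|H_S(a)|/\normone{H_S}{K}$ of $p(a)$ using the density of $B$ and the finiteness of $T$. One small point: $\normone{H_S}{K}>0$ holds because the zeros of $H_S$ are simple, so $H_S$ and $H_S'$ never vanish together; $H_S\not\equiv0$ alone would not give this, although if the norm were zero the estimate in (iii) would hold trivially anyway.
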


\begin{proof}
Since $H_S(a)\neq0$ and $\normone{H_S}{K}>0$, and since $B\setminus T$ is
dense in $\C$, we may choose $b\in B\setminus T$ so that
\[
    |b-p(a)|
      <\frac{\varepsilon |H_S(a)|}{\normone{H_S}{K}}.
\]
Set
\[
    q(z):=p(z)+\frac{b-p(a)}{H_S(a)}H_S(z).
\]
Then $q=p$ on $S$, while $q(a)=b$, and
\[
    \normone{q-p}{K}
      =\left|\frac{b-p(a)}{H_S(a)}\right|\normone{H_S}{K}
      <\varepsilon.
\]
\end{proof}

\begin{lemma}\label{lem:add-target}
Let $p\in\C[z]$, let $S\subset\R$ be finite, and let
$b\in\C\setminus p(S)$.  Given a nonempty compact set $K\subset\C$ and
$\varepsilon>0$, there exist $a\in A\setminus S$ and $q\in\C[z]$ such that
\begin{enumerate}
    \item $q(s)=p(s)$ for every $s\in S$;
    \item $q(a)=b$;
    \item $\normone{q-p}{K}<\varepsilon$.
\end{enumerate}
\end{lemma}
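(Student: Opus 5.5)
Here $A$ is dense in $\R$ (not in $\C$) and $b$ is an arbitrary complex number off $p(S)$, so the trick of Lemma~\ref{lem:add-source} (perturb the target value slightly) is unavailable: $b$ is fixed and may be far from every value $p(a)$, $a\in\R$. I will therefore interpolate exactly and control the size of the correction by making $a$ large rather than close. The perturbation will be $q=p+cH_S\cdot g$, where $g$ is a polynomial that is tiny on $K$ but sizable at $a$. The cleanest choice is to first pick $R>0$ with $K\cup S\subset \overline D(0,R)$ and then take $a\in A\setminus S$ with $|a|>2R$, which is possible since $A$ is dense in $\R$ and hence unbounded.

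For the correction, set
\[
    q(z):=p(z)+\bigl(b-p(a)\bigr)\frac{H_S(z)}{H_S(a)}\Bigl(\frac{z}{a}\Bigr)^{N}
\]
for a large integer $N$. Then $q=p$ on $S$ because $H_S$ vanishes there, and $q(a)=p(a)+(b-p(a))=b$. (If $0\in S$, the factor $z^N$ only adds zeros, which is harmless.) It remains to estimate
\[
    \normone{q-p}{K}\le \frac{|b-p(a)|}{|H_S(a)|}\,\normone{H_S(z)(z/a)^N}{K}.
\]
On $K\subset\overline D(0,R)$ we have $|z/a|^N\le 2^{-N}$ and $|\tfrac{d}{dz}(z/a)^N|\le N|a|^{-1}2^{-(N-1)}$. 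Hence $\normone{H_S(z)(z/a)^N}{K}\le C_K(N+1)2^{-N}$, with $C_K$ depending only on $H_S$ and $R$.

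The main obstacle is only the order of choices: $a$ must be fixed before $N$, since $|b-p(a)|/|H_S(a)|$ depends on $a$. So first fix $a$ with $|a|>2R$ and $a\notin S$ (automatic, since $|a|>R$). Since $H_S(a)\neq0$, the constant $M:=|b-p(a)|/|H_S(a)|$ is finite. Then choose $N$ with $M C_K (N+1)2^{-N}<\varepsilon$, which gives (iii). Note that the hypothesis $b\notin p(S)$ is not needed for this construction; it merely reflects the intended use in a back-and-forth argument, in which $b$ should not already be attained on $S$. Alternatively, one could take $a$ close to a fixed real point and use Runge-type approximation, but the explicit monomial factor avoids this.
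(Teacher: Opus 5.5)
Your proof is correct. It uses the same ansatz as the paper: add to $p$ a multiple of $H_S(z)z^k$, which vanishes on $S$, and interpolate $b$ exactly at a point $a\in A$ far from $K$. What differs is which parameter is sent to the limit.

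\textbf{The paper's route.} The paper fixes the exponent $M$ in advance, just large enough that $|S|+M>\max\{\deg p,0\}$. Then $\normone{H_S(z)z^M}{K}$ is a constant independent of $a$. A degree count gives $c_a=(b-p(a))/(H_S(a)a^M)\to0$ as $|a|\to\infty$. Smallness therefore comes from choosing $a\in A$ large, depending on $\varepsilon$.

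\textbf{Your route.} You fix a single $a$ with $|a|>2R$, independently of $\varepsilon$, $p$ and $b$. Smallness then comes from the geometric decay of $(z/a)^N$ on $K\subset\overline D(0,R)$ as $N\to\infty$.

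\textbf{Comparison.} Your version needs no growth comparison at infinity. It also shows that any single point of $A$ outside $\overline D(0,R)$ works; in fact $|a|>R$ would already suffice, since $(N+1)(R/|a|)^N\to0$. The price is that the degree of the correction grows as $\varepsilon\to0$, whereas the paper keeps $\deg(q-p)\le|S|+M$ fixed. Neither feature matters in the inductive construction. Your version adapts equally well to the modifications in \corref{cor:continuum} (since $a$ is real, $(a-z_0)H_S(a)\neq0$) and in Proposition~\ref{prop:carrier} (since $A$ is unbounded).

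Your remark that the hypothesis $b\notin p(S)$ is never used is also accurate for the paper's proof. That hypothesis only serves to preserve injectivity in the main construction.
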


\begin{proof}
Set $h:=\deg H_S$ and $d:=\deg p$, and choose $M\geq0$ so that
\[
    h+M>\max\{d,0\}.
\]
For $a\in\R\setminus(S\cup\{0\})$, put
\[
    c_a:=\frac{b-p(a)}{H_S(a)a^M}.
\]
As $|a|\to\infty$ on the real axis,
\[
    H_S(a)a^M=a^{h+M}(1+o(1))
    \qquad\text{and}\qquad
    b-p(a)=O\bigl(|a|^{\max\{d,0\}}\bigr).
\]
The choice of $M$ therefore gives $c_a\to0$.

Let
\[
    R_M(z):=H_S(z)z^M,
    \qquad
    M_K:=\max\{1,\normone{R_M}{K}\}.
\]
The density of $A$ implies that $A\setminus(S\cup\{0\})$ is unbounded.  We may
therefore choose $a$ in this set, with $|a|$ sufficiently large, so that
\[
    |c_a|M_K<\varepsilon.
\]
Set
\[
    q(z):=p(z)+c_aR_M(z).
\]
Then $q=p$ on $S$, $q(a)=b$, and
\[
    \normone{q-p}{K}
      =|c_a|\normone{R_M}{K}
      \leq|c_a|M_K
      <\varepsilon.\qedhere
\]
\end{proof}

\begin{lemma}\label{lem:clearing}
Let $q\in\C[z]$, let $S\subset\R$ and $E\subset\C$ be finite, and assume that
$E\neq\varnothing$ and $q(S)\subseteq E$.  Let $I=[u,v]\subset\R$, where
$u<v$.  For every nonempty compact set $K\subset\C$ and every
$\varepsilon>0$, there exists $p\in\C[z]$ such that
\begin{enumerate}
    \item $p(s)=q(s)$ for every $s\in S$;
    \item $p^{-1}(E)\cap I=S\cap I$;
    \item $p'(s)\neq0$ for every $s\in S$;
    \item $\normone{p-q}{K}<\varepsilon$.
\end{enumerate}
\end{lemma}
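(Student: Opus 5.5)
The plan is to perturb $q$ inside the ideal generated by $H_S$, so that the values on $S$ cannot move. I look for
\[
    p_t(z):=q(z)+t\,H_S(z),\qquad t\in\C,
\]
with $|t|$ small and $t$ avoiding a planar null set of bad parameters. Condition (i) then holds for every $t$. If $0<|t|\,\normone{H_S}{K}<\varepsilon$, condition (iv) also holds, since $\normone{H_S}{K}>0$. When $S=\varnothing$ we have $H_S\equiv1$ and the argument below still applies.

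\textbf{Condition (iii).} Each $s\in S$ is a simple zero of $H_S$, so $H_S'(s)\neq0$. This gives $p_t'(s)=q'(s)+tH_S'(s)$, and it fails to vanish unless $t=-q'(s)/H_S'(s)$. That excludes only finitely many values of $t$.

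\textbf{Condition (ii).} Fix $e\in E$ and a point $x\in I\setminus S$. Since $H_S(x)\neq0$, we have $p_t(x)=e$ exactly when
\[
    t=\phi_e(x):=\frac{e-q(x)}{H_S(x)}.
\]
So the bad parameters for (ii) form the set $\bigcup_{e\in E}\phi_e(I\setminus S)$. Each $\phi_e$ is a rational function of a real variable with poles only at points of $S$.

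Write $I\setminus S$ as a countable union of compact intervals $J_n$. On each $J_n$ the map $\phi_e$ is $C^1$, hence Lipschitz, from a subset of $\R$ into $\C\cong\R^2$. The image of a compact interval under a Lipschitz map into $\R^2$ has planar Lebesgue measure zero: cover $J_n$ by $N$ subintervals of length $|J_n|/N$, each mapped into a disc of radius $O(1/N)$, for total area $O(1/N)\to0$. Hence the bad set $Z:=\bigcup_{e\in E}\bigcup_n\phi_e(J_n)$ is a countable union of null sets, and so is null. The same holds after adding the finitely many values excluded in (iii).

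\textbf{Choice of $t$.} The disc $\{0<|t|<\varepsilon/\normone{H_S}{K}\}$ has positive measure, so it contains some $t\notin Z$ that is also not among the excluded values. For this $t$, no $x\in I\setminus S$ satisfies $p_t(x)\in E$. On the other hand, each $s\in S\cap I$ satisfies $p_t(s)=q(s)\in E$. This gives $p_t^{-1}(E)\cap I=S\cap I$, and (i)–(iv) all hold with $p:=p_t$.

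The only step needing real care is the measure-zero claim for the bad parameter set near the points of $S$. There $\phi_e$ may blow up (when $e\neq q(s)$) or extend continuously (when $e=q(s)$). Exhausting $I\setminus S$ by compact intervals avoiding $S$ sidesteps both behaviours, because the points of $S$ themselves are allowed preimages. The rest is bookkeeping.
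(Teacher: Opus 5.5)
Your proposal is correct and follows the paper's proof essentially step for step. It uses the same perturbation $q+tH_S$, the same null set of bad parameters $\bigcup_{e\in E}\Phi_e(I\setminus S)$ obtained by exhausting $I\setminus S$ with compact intervals, and the same exclusion of the finitely many values $-q'(s)/H_S'(s)$ before choosing $t$ in a small disc. You assert $\normone{H_S}{K}>0$ without justifying it for an arbitrary nonempty compact $K$; it holds because $H_S$ and $H_S'$ have no common zero, which follows from the simple zeros you already note.
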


\begin{proof}
Use the same polynomial $H_S$.  For $t\in\C$, define
\[
    p_t(z):=q(z)+tH_S(z).
\]
This perturbation fixes the values of $q$ at every point of $S$.

Fix $e\in E$.  If $x\in I\setminus S$, then $H_S(x)\neq0$, and the equation
$p_t(x)=e$ is equivalent to
\[
    t=\Phi_e(x),
    \qquad
    \Phi_e(x):=\frac{e-q(x)}{H_S(x)}.
\]
The set $I\setminus S$ has finitely many connected components, each of which
is a countable union of compact subintervals on which $H_S$ does not vanish.
On every such subinterval $J$, the map $\Phi_e$ is of class $C^1$.  The image
of a compact interval under a $C^1$ map into $\C$ has planar Lebesgue measure
zero.  Hence, by countable subadditivity,
\[
    m_2\bigl(\Phi_e(I\setminus S)\bigr)=0,
\]
where $m_2$ denotes planar Lebesgue measure.

The set of all parameters that create an unwanted point of $p_t^{-1}(E)$ in
$I\setminus S$ is
\[
    \mathcal B
      :=\bigcup_{e\in E}\Phi_e(I\setminus S).
\]
The set $E$ is finite, so $m_2(\mathcal B)=0$.

We also exclude those parameters $t$ for which $p_t'(s)=0$ at some $s\in S$.  For
every $s\in S$,
\[
    p_t'(s)=q'(s)+tH_S'(s).
\]
The derivative of $H_S$ at $s$ is
\[
    H_S'(s)=\prod_{u\in S\setminus\{s\}}(s-u),
\]
and this product is nonzero.  Therefore precisely one parameter can satisfy
$p_t'(s)=0$, namely
\[
    t_s:=-\frac{q'(s)}{H_S'(s)}.
\]
Define
\[
    \mathcal E:=\mathcal B\cup\{t_s:s\in S\}.
\]
The set $\mathcal E$ has planar measure zero.  In particular, it cannot contain any nonempty open disc.

Set $C_K:=\normone{H_S}{K}$.  If $S=\varnothing$, then $H_S\equiv1$.
If $S\neq\varnothing$, all zeros of $H_S$ are simple, so $H_S$ and $H_S'$ do
not vanish simultaneously.  Since $K$ is nonempty, in either case $C_K>0$.
The disc
\[
    D\left(0,\frac{\varepsilon}{C_K}\right)
\]
contains a point $t\notin\mathcal E$.  For this choice,
\[
    \normone{p_t-q}{K}=|t|C_K<\varepsilon.
\]
The condition $t\notin\mathcal B$ gives
\[
    p_t^{-1}(E)\cap(I\setminus S)=\varnothing.
\]
The identity $p_t(s)=q(s)\in E$ holds for every $s\in S$.  Hence
\[
    p_t^{-1}(E)\cap I=S\cap I.
\]
Finally, $t\neq t_s$ for every $s\in S$, so $p_t'(s)\neq0$.  Taking $p=p_t$
proves the lemma.
\end{proof}

\begin{lemma}\label{lem:stability}
Let $I=[u,v]\subset\R$, where $u<v$, let
$E\subset\C$ be finite and nonempty, and let $p\colon I\to\C$ be of class
$C^1$.  Suppose that
\[
    p^{-1}(E)=S
\]
for a finite set $S\subset I$, and suppose that $p'(s)\neq0$ for every
$s\in S$.  Then there exists $\eta>0$ with the following property: if
$g\colon I\to\C$ is of class $C^1$, if $g(s)=p(s)$ for all $s\in S$, and if
\[
    \sup_{x\in I}|g(x)-p(x)|+
    \sup_{x\in I}|g'(x)-p'(x)|<\eta,
\]
then
\[
    g^{-1}(E)=S,
\]
and $g'(s)\neq0$ for every $s\in S$.
\end{lemma}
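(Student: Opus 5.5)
We argue by a compactness/local-uniqueness split: near each point of $S$ we use the nonvanishing derivative to keep $g$ injective there, and away from $S$ we use that $p$ stays a positive distance from the finite set $E$. Write $S=\{s_1,\dots,s_m\}$ (possibly empty) and $\mu:=\min_{s\in S}|p'(s)|>0$.

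\emph{Local step near $S$.} By continuity of $p'$ on $I$, choose $\delta>0$ such that $|p'(x)-p'(s)|<\mu/4$ whenever $x\in I$, $s\in S$, and $|x-s|\leq\delta$. Also take $\delta$ small enough that the intervals $U_s:=[s-\delta,s+\delta]\cap I$ are pairwise disjoint. Now suppose $g$ is $C^1$, $\eta\leq\mu/4$, and $\sup_I|g'-p'|<\eta$. Then for $x\in U_s$ we have $|g'(x)-p'(s)|<\mu/2$. For distinct $x,y\in U_s$, write $g(x)-g(y)=\int_y^x g'(\tau)\,d\tau$. Since $U_s$ is an interval, this gives
\[
\bigl|g(x)-g(y)-p'(s)(x-y)\bigr|\leq\tfrac{\mu}{2}|x-y|.
\]
Hence $|g(x)-g(y)|\geq\tfrac{\mu}{2}|x-y|>0$, so $g$ is injective on $U_s$. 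The inequality $|g'(s)-p'(s)|<\mu/2$ also gives $g'(s)\neq0$.

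\emph{Control of the remaining values.} Fix $s\in U_s$, so $g(s)=p(s)=:e_s\in E$. Injectivity on $U_s$ shows that $g^{-1}(e_s)\cap U_s=\{s\}$. We must also exclude the other values $e\in E\setminus\{e_s\}$ on $U_s$. Let $\rho:=\min\{|e-e'|:e\neq e'\in E\}$ (or $\rho:=1$ if $|E|=1$). Shrink $\delta$ so that $|p(x)-p(s)|<\rho/4$ on $U_s$. If $\eta<\rho/4$, then $|g(x)-e_s|<\rho/2$ on $U_s$, so $g(x)\neq e$ for every other $e\in E$. On the compact set $L:=I\setminus\bigcup_s \operatorname{int}_I U_s$, the continuous function $x\mapsto\dist(p(x),E)$ is positive, because $p^{-1}(E)=S$ does not meet $L$. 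Call its minimum $\kappa>0$; if $L$ is empty, there is nothing to check. Taking $\eta<\kappa$ then gives $|g(x)-p(x)|<\kappa$ on $L$, so $g(L)\cap E=\varnothing$.

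Set $\eta:=\min\{\mu/4,\rho/4,\kappa\}$. Combining the two steps gives $g^{-1}(E)=S$ and $g'(s)\neq0$ for all $s\in S$. The main delicacy is the local injectivity step. Since the functions are complex valued, we cannot use the real mean value theorem directly. The integral estimate above, comparing $g'$ with the constant $p'(s)$, replaces it. This is why $\delta$ is chosen using the continuity of $p'$ before $\eta$ is fixed. The hypothesis $g(s)=p(s)$ is essential here: it is what places the unique preimage in $U_s$ exactly at $s$.
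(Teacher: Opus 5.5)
Your proof is correct and follows essentially the paper's route. You split $I$ into disjoint neighborhoods $U_s$ of the points of $S$, where $g$ can only meet $E$ at $p(s)$ because $E$ is separated, and a compact remainder $L$, where $\dist(p,E)$ has a positive minimum. The local uniqueness step differs only in form: you compare $g'$ with the constant $p'(s)$ to get $|g(x)-g(y)|\geq\frac{\mu}{2}|x-y|$. The paper instead rotates by a unit $\lambda_s$ with $\lambda_s p'(s)=|p'(s)|$ and uses strict monotonicity of $\operatorname{Re}\bigl(\lambda_s(g-p(s))\bigr)$. Both arguments serve to replace the mean value theorem, which fails for complex-valued $g$.

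Two small fixes. When $S$ or $L$ is empty, read $\mu$ or $\kappa$ as $+\infty$ in your definition of $\eta$. Also, ``Fix $s\in U_s$'' should read ``Fix $s\in S$''.
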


\begin{proof}
If $S=\varnothing$, then the compact sets $p(I)$ and $E$ are disjoint.  Their
distance
\[
    d:=\dist(p(I),E)
\]
is positive.  The conclusion follows from any choice $0<\eta<d$, because
$\sup_I|g-p|<\eta$ then implies $g(I)\cap E=\varnothing$.  We therefore assume
that $S\neq\varnothing$.

Fix $s\in S$.  Choose $\lambda_s\in\C$ with $|\lambda_s|=1$ and
\[
    \lambda_sp'(s)=|p'(s)|.
\]
The function $x\mapsto\Real{\lambda_sp'(x)}$ is continuous and has the positive
value $|p'(s)|$ at $s$.  We may therefore choose $\delta_s>0$ and $c_s>0$ such
that, for
\[
    J_s:=I\cap[s-\delta_s,s+\delta_s],
\]
one has
\[
    \Real{\lambda_sp'(x)}\geq2c_s
    \qquad(x\in J_s).
\]
Because $S$ is finite, the numbers $\delta_s$ may be chosen so that the intervals
$J_s$ are pairwise disjoint.

Since $E$ is finite, choose $r_s>0$ such that
\[
    \overline D\bigl(p(s),3r_s\bigr)\cap E=\{p(s)\}.
\]
By decreasing $\delta_s$ if necessary, continuity of $p$ gives
\[
    p(J_s)\subset D\bigl(p(s),r_s\bigr).
\]

Since $S$ is finite and $I$ is nondegenerate, $I\setminus S\neq\varnothing$.
By decreasing the numbers $\delta_s$ further, we may also assume that
\[
    L:=I\setminus\bigcup_{s\in S}\bigl(I\cap(s-\delta_s,s+\delta_s)\bigr)
\]
is nonempty.  The set $L$ is compact and disjoint from $S$.  Since $p^{-1}(E)=S$, the compact sets
$p(L)$ and $E$ are disjoint.  Therefore
\[
    d:=\dist(p(L),E)>0.
\]

Choose $\eta>0$ such that
\[
    0<\eta<
    \min\left\{
        \frac d2,
        \min_{s\in S}r_s,
        \min_{s\in S}c_s,
        \frac12\min_{s\in S}|p'(s)|
    \right\}.
\]
Assume that $g$ satisfies the hypotheses stated in the lemma with this value of
$\eta$.

For $x\in J_s$, we have
\[
\begin{aligned}
    \Real{\lambda_sg'(x)}
      &=\Real{\lambda_sp'(x)}
        +\Real{\lambda_s(g'(x)-p'(x))} \\
      &\geq\Real{\lambda_sp'(x)}
        -|\lambda_s|\,|g'(x)-p'(x)| \\
      &=\Real{\lambda_sp'(x)}-|g'(x)-p'(x)| \\
      &>2c_s-\eta \\
      &>c_s>0.
\end{aligned}
\]
Thus the real-valued function
\[
    \psi_s(x):=\Real{\lambda_s(g(x)-p(s))}
\]
is strictly increasing on $J_s$.  Furthermore, for $x\in J_s$,
\[
\begin{split}
    |g(x)-p(s)|
      &\leq |g(x)-p(x)|+|p(x)-p(s)| \\
      &<\eta+r_s \\
      &<2r_s.
\end{split}
\]
Hence
\[
    g(J_s)\subset D\bigl(p(s),2r_s\bigr).
\]
By the choice of $r_s$,
\[
    D\bigl(p(s),2r_s\bigr)\cap E=\{p(s)\}.
\]
Hence $g(J_s)\cap E\subseteq\{p(s)\}$.  Since $g(s)=p(s)$, we have
$\psi_s(s)=0$.  If $g(x)=p(s)$ for some $x\in J_s$, then $\psi_s(x)=0$, and
the strict monotonicity of $\psi_s$ forces $x=s$.  It follows that
\[
    g^{-1}(E)\cap J_s=\{s\}.
\]

For $x\in L$,
\[
    \dist(g(x),E)
      \geq\dist(p(x),E)-|g(x)-p(x)|
      >d-\eta
      >\frac d2.
\]
Thus $g(L)\cap E=\varnothing$.  Combining this fact with the conclusions on the
intervals $J_s$ gives $g^{-1}(E)=S$.  Finally, for $s\in S$,
\[
    |g'(s)|
      \geq |p'(s)|-|g'(s)-p'(s)|
      >\frac{|p'(s)|}{2}>0.
\]
\end{proof}

\begin{proof}[Proof of \thmref{thm:main}]
Choose injective enumerations
\[
    A=\{\alpha_1,\alpha_2,\ldots\},
    \qquad
    B=\{\beta_1,\beta_2,\ldots\}.
\]
For $R>0$, define
\[
    D_R:=\{z\in\C:|z|\leq R\}.
\]
For $n\geq1$, define
\[
    I_n:=[-n,n].
\]
We construct polynomials $p_n\in\C[z]$, finite sets $S_n\subset A$ and
$T_n\subset B$, and positive numbers $\eta_n$.  Set
\[
    p_0\equiv0,
    \qquad
    S_0=T_0=\varnothing.
\]
At the end of stage $n$, the following conditions will hold:
\begin{enumerate}[label=(\alph*)]
    \item $S_{n-1}\subseteq S_n$ and $T_{n-1}\subseteq T_n$;
    \item $p_n\!\restriction_{S_n}\colon S_n\to T_n$ is a bijection;
    \item $\alpha_1,\ldots,\alpha_n\in S_n$ and
          $\beta_1,\ldots,\beta_n\in T_n$;
    \item $p_n(s)=p_{n-1}(s)$ for every $s\in S_{n-1}$;
    \item $p_n^{-1}(T_n)\cap I_n=S_n\cap I_n$;
    \item $p_n'(s)\neq0$ for every $s\in S_n$;
    \item the conclusion of \lemref{lem:stability} holds with
          $\eta=\eta_n$ for
          $(I_n,T_n,p_n\!\restriction_{I_n},S_n\cap I_n)$;
    \item
    \[
        \normone{p_n-p_{n-1}}{D_n}<\rho_n,
        \qquad
        \rho_n:=2^{-n}\min\{1,\eta_1,\ldots,\eta_{n-1}\},
    \]
    where the minimum is defined to be $1$ when $n=1$.
\end{enumerate}

Assume that the construction has been completed through stage $n-1$.  First we
insert $\alpha_n$ if it has not already been used.  If
$\alpha_n\notin S_{n-1}$, apply \lemref{lem:add-source} with
\[
    p=p_{n-1},\quad S=S_{n-1},\quad T=T_{n-1},\quad
    a=\alpha_n,\quad K=D_n,\quad \varepsilon=\frac{\rho_n}{3}.
\]
Let $b_n^{(1)}\in B\setminus T_{n-1}$ and $q_n^{(1)}\in\C[z]$ be the
point and the polynomial supplied by \lemref{lem:add-source}.  Define
\[
    S_n^{(1)}:=S_{n-1}\cup\{\alpha_n\},
    \qquad
    T_n^{(1)}:=T_{n-1}\cup\{b_n^{(1)}\}.
\]
If $\alpha_n\in S_{n-1}$, define instead
\[
    q_n^{(1)}:=p_{n-1},
    \qquad
    S_n^{(1)}:=S_{n-1},
    \qquad
    T_n^{(1)}:=T_{n-1}.
\]
In either case, $q_n^{(1)}\!\restriction_{S_n^{(1)}}$ is a bijection from
$S_n^{(1)}$ onto $T_n^{(1)}$.  If $\alpha_n\notin S_{n-1}$, then
\[
    \normone{q_n^{(1)}-p_{n-1}}{D_n}<\frac{\rho_n}{3}.
\]
If $\alpha_n\in S_{n-1}$, then $q_n^{(1)}=p_{n-1}$, and the norm on the
left is zero.

We next insert $\beta_n$ if it has not already been used.  If
$\beta_n\notin T_n^{(1)}$, then
\[
    \beta_n\notin q_n^{(1)}(S_n^{(1)}),
\]
because $q_n^{(1)}(S_n^{(1)})=T_n^{(1)}$.  Apply
\lemref{lem:add-target} with
\[
    p=q_n^{(1)},\quad S=S_n^{(1)},\quad b=\beta_n,
    \quad K=D_n,\quad \varepsilon=\frac{\rho_n}{3}.
\]
Let $a_n^{(2)}\in A\setminus S_n^{(1)}$ and $q_n^{(2)}\in\C[z]$ be the
point and the polynomial supplied by \lemref{lem:add-target}.  Define
\[
    S_n^{(2)}:=S_n^{(1)}\cup\{a_n^{(2)}\},
    \qquad
    T_n^{(2)}:=T_n^{(1)}\cup\{\beta_n\}.
\]
If $\beta_n\in T_n^{(1)}$, define
\[
    q_n^{(2)}:=q_n^{(1)},
    \qquad
    S_n^{(2)}:=S_n^{(1)},
    \qquad
    T_n^{(2)}:=T_n^{(1)}.
\]
Again, $q_n^{(2)}\!\restriction_{S_n^{(2)}}$ is a bijection from
$S_n^{(2)}$ onto $T_n^{(2)}$.  If $\beta_n\notin T_n^{(1)}$, then
\[
    \normone{q_n^{(2)}-q_n^{(1)}}{D_n}<\frac{\rho_n}{3}.
\]
If $\beta_n\in T_n^{(1)}$, then $q_n^{(2)}=q_n^{(1)}$, and the norm on the
left is zero.

Set
\[
    S_n:=S_n^{(2)},
    \qquad
    T_n:=T_n^{(2)}.
\]
Apply \lemref{lem:clearing} with
\[
    q=q_n^{(2)},\quad S=S_n,\quad E=T_n,
    \quad I=I_n,\quad K=D_n,
    \quad \varepsilon=\frac{\rho_n}{3}.
\]
The set $T_n$ is nonempty because $n\geq1$ and the first two steps ensure that
$\alpha_1\in S_n$ and $q_n^{(2)}(\alpha_1)\in T_n$.  Let
$p_n\in\C[z]$ be the polynomial supplied by \lemref{lem:clearing}.  This
polynomial fixes the values at all points of $S_n$, satisfies
\[
    p_n^{-1}(T_n)\cap I_n=S_n\cap I_n,
\]
and has $p_n'(s)\neq0$ for every $s\in S_n$.  The triangle inequality gives
\[
\begin{split}
    \normone{p_n-p_{n-1}}{D_n}
      &\leq \normone{p_n-q_n^{(2)}}{D_n}
           +\normone{q_n^{(2)}-q_n^{(1)}}{D_n}
           +\normone{q_n^{(1)}-p_{n-1}}{D_n} \\
      &<\rho_n.
\end{split}
\]
Finally, by \lemref{lem:stability}, choose $\eta_n>0$ so that its
conclusion holds for
$(I_n,T_n,p_n\!\restriction_{I_n},S_n\cap I_n)$.  This completes stage $n$.

Fix $R>0$ and choose an integer $N_R\geq R$.  For every $n\geq N_R$,
$D_R\subseteq D_n$.  Hence
\[
    \normone{p_n-p_{n-1}}{D_R}
      \leq\normone{p_n-p_{n-1}}{D_n}
      <\rho_n
      \leq2^{-n}.
\]
The series of successive differences therefore converges in the $C^1$ norm on
$D_R$.  Since $R$ was arbitrary, the sequence $(p_n)$ converges locally
uniformly on $\C$ to an entire function $f$.  The derivatives $(p_n')$ converge
locally uniformly to $f'$.

We now verify the bijection on $A$.  Let $a\in A$.  There exists $N$ with
$a\in S_N$, because the enumeration condition places $\alpha_n$ in $S_n$ by
stage $n$.  Every later polynomial fixes the values on $S_N$.  Therefore
\[
    f(a)=p_N(a)\in T_N\subset B.
\]
If $a,a'\in A$ are distinct, choose $N$ so large that both points belong to
$S_N$.  The map $p_N\!\restriction_{S_N}$ is injective, and hence
$f(a)\neq f(a')$.  Thus $f\!\restriction_A$ is injective.  Conversely, let
$b\in B$.  Choose $N$ with $b\in T_N$.  The bijection
$p_N\!\restriction_{S_N}\colon S_N\to T_N$ supplies $a\in S_N$ such that
$p_N(a)=b$.  All later polynomials fix this equality, so $f(a)=b$.  Hence
$f\!\restriction_A\colon A\to B$ is bijective.

It remains to determine the real preimages of $B$.  Fix $N\geq1$.  For every
$m>N$, the definition of $\rho_m$ gives
\[
    \rho_m\leq2^{-m}\eta_N.
\]
Since $I_N\subset D_m$, we have
\[
    \normone{p_m-p_{m-1}}{I_N}
       \leq\normone{p_m-p_{m-1}}{D_m}
       <2^{-m}\eta_N.
\]
Summing the tail gives
\[
\begin{split}
    \normone{f-p_N}{I_N}
      &\leq\sum_{m=N+1}^{\infty}
             \normone{p_m-p_{m-1}}{I_N} \\
      &<\eta_N\sum_{m=N+1}^{\infty}2^{-m} \\
      &=2^{-N}\eta_N \\
      &<\eta_N.
\end{split}
\]
Every polynomial $p_m$ with $m\geq N$ agrees with $p_N$ on $S_N$.  Passing to
the limit gives $f(s)=p_N(s)$ for $s\in S_N$.  Apply \lemref{lem:stability} with
\[
    I=I_N,\qquad E=T_N,\qquad
    p=p_N\!\restriction_{I_N},\qquad
    S=S_N\cap I_N,\qquad
    g=f\!\restriction_{I_N}.
\]
The preceding estimates verify its hypotheses, and hence
\[
    f^{-1}(T_N)\cap I_N=S_N\cap I_N.
\]

Let $x\in\R$ and suppose that $f(x)\in B$.  The sets $T_N$ increase and their
union is $B$, so there exists $N_1$ with $f(x)\in T_{N_1}$.  Choose
$N\geq\max\{N_1,|x|\}$.  Then $x\in I_N$ and $f(x)\in T_N$.  The preceding
identity gives $x\in S_N\subset A$.  We have proved
\[
    f^{-1}(B)\cap\R\subseteq A.
\]
The reverse inclusion follows from $f(A)=B$.  Therefore
\[
    f^{-1}(B)\cap\R=A.
\]

The function $f$ is nonconstant because $f(A)=B$ and $B$ is dense in $\C$.
Suppose, for a contradiction, that $f$ is a polynomial.  Then $|f(x)|\to\infty$
as $|x|\to\infty$.  We first show that $f(\R)$ is closed.  Let
$f(x_k)\to w\in\C$.  If $(x_k)$ were unbounded, then, after passing to a
subsequence, we would have $|x_k|\to\infty$, which would imply
$|f(x_k)|\to\infty$.  This contradicts the convergence of $f(x_k)$.  Thus
$(x_k)$ is bounded.  After passing to a further subsequence, we may assume that
$x_k\to x\in\R$, and continuity gives $w=f(x)$.  Hence $f(\R)$ is closed.
Since $B\subset f(\R)$ and $B$ is dense in $\C$, it would follow that
$f(\R)=\C$.

For each positive integer $m$, the set $f([-m,m])$ is the image of a compact
interval under a $C^1$ map into $\C$, and hence has planar Lebesgue measure zero.
Consequently,
\[
    f(\R)=\bigcup_{m=1}^{\infty}f([-m,m])
\]
has planar measure zero.  It cannot equal $\C$.  This contradiction proves that
$f$ is transcendental.  Finally, $B=f(A)\subset f(\R)$, and the density of
$B$ gives
\[
    \overline{f(\R)}=\C.
\]
\end{proof}

\begin{corollary}\label{cor:noncritical}
The function in \thmref{thm:main} may be chosen so that
\[
    f'(a)\neq0\qquad(a\in A).
\]
\end{corollary}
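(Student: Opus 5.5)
The plan is to show that the function $f$ built in the proof of \thmref{thm:main} already satisfies $f'(a)\neq0$ for every $a\in A$. No new construction is needed; the claim follows from the stability statement, condition~(g), combined with the tail estimate already proved. Fix $a\in A$. Choose $N$ so large that $a\in S_N$ and $|a|\leq N$. Then $a\in S_N\cap I_N$. Condition~(f) gives $p_N'(a)\neq0$, and condition~(g) says that the conclusion of \lemref{lem:stability} holds with $\eta=\eta_N$ for the data $(I_N,T_N,p_N\!\restriction_{I_N},S_N\cap I_N)$.

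Next I would check, as in the last part of the proof of \thmref{thm:main}, that $g=f\!\restriction_{I_N}$ satisfies the hypotheses of \lemref{lem:stability}. First, $g$ is $C^1$, since $f$ is entire. Second, $g(s)=p_N(s)$ for $s\in S_N\cap I_N$, because every later $p_m$ fixes the values on $S_N$. Third, the tail estimate gives
\[
    \sup_{I_N}|f-p_N|+\sup_{I_N}|f'-p_N'|\leq\normone{f-p_N}{I_N}<2^{-N}\eta_N<\eta_N .
\]
This estimate is valid because $\rho_m\leq2^{-m}\eta_N$ for $m>N$, and because $p_m'\to f'$ locally uniformly, so the $C^1$ norms of the differences telescope.

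The conclusion of \lemref{lem:stability} then includes $g'(s)\neq0$ for every $s\in S_N\cap I_N$. In particular $f'(a)\neq0$. Since $a\in A$ was arbitrary, the corollary follows.

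The only point that needs care is the one the theorem's proof already handles. The stability lemma must be applied with a perturbation that is small in the $C^1$ sense, not merely uniformly small. This is why the construction uses the norm $\normone{\cdot}{K}$ and requires derivative convergence. If one wanted a self-contained check of nonvanishing, one could also argue directly: $|f'(a)|\geq|p_N'(a)|-|f'(a)-p_N'(a)|>|p_N'(a)|-\eta_N$, and $\eta_N<\tfrac12|p_N'(a)|$ by the choice of $\eta$ in the proof of \lemref{lem:stability}. This gives $|f'(a)|>\tfrac12|p_N'(a)|>0$.
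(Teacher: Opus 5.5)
Your proposal is correct and matches the paper's proof: you choose $N$ with $a\in S_N\cap I_N$, use the tail estimate $\normone{f-p_N}{I_N}<\eta_N$ together with $f=p_N$ on $S_N$, and invoke the derivative part of the conclusion of \lemref{lem:stability} through condition~(g). Your closing direct estimate also works, but it depends on $\eta_N$ having been chosen by the recipe in the lemma's proof (so that $\eta_N<\tfrac12|p_N'(a)|$); condition~(g) alone does not guarantee that bound.
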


\begin{proof}
Let $f$ be the function constructed in the proof of \thmref{thm:main}, and fix
$a\in A$.  Choose $N$ so large that $a\in S_N\cap I_N$.  The tail estimate in
the proof gives
\[
    \normone{f-p_N}{I_N}<\eta_N,
\]
and $f=p_N$ on $S_N$.  The derivative conclusion of
\lemref{lem:stability}, applied on $I_N$, therefore yields $f'(a)\neq0$.
\end{proof}

\begin{corollary}\label{cor:continuum}
The set of functions satisfying the conclusion of \thmref{thm:main} and the
condition $f'(a)\neq0$ for every $a\in A$ has the cardinality of the
continuum.
\end{corollary}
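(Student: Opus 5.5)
Two bounds are needed: at most continuum many such functions, and at least continuum many.

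\emph{Upper bound.} An entire function is determined by its Taylor coefficients at $0$, which form a sequence in $\C^{\mathbb N}$. This set has cardinality $\mathfrak c^{\aleph_0}=\mathfrak c$, so the family in question has cardinality at most $\mathfrak c$.

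\emph{Lower bound.} I would inject $\{0,1\}^{\mathbb N}$ into the family by letting a binary sequence $\sigma$ steer the construction in the proof of \thmref{thm:main}. The first step of each stage is to be modified. At stage $n$, use \lemref{lem:add-source} with the point $a=\alpha_n$ even when it is already present, or better, apply it at a fresh point of $A$. The cleanest version is the following.

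\begin{enumerate}
\item Fix, in addition to the enumeration, a real point $x_0\notin A$. Real points outside $A$ exist since $A$ is countable.
\item At every stage $n$, after producing $p_n$, also record the value $p_n(x_0)$.
\item In the clearing step of \lemref{lem:clearing}, the parameter $t$ ranges over the complement of a null set in a small disc. Since $H_{S_n}(x_0)\neq0$, the value $p_t(x_0)=q(x_0)+tH_{S_n}(x_0)$ therefore ranges over the complement of a null set in a disc.
\item Choose $t$ so that $\Real{p_n(x_0)-p_{n-1}(x_0)}$ has sign $(-1)^{\sigma_n}$ and modulus comparable to the allowed size. For this, replace the simple choice of $t$ by one that also lies in a prescribed half of the disc; a half-disc still contains points outside the null set $\mathcal E$.
\end{enumerate}

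The difficulty is ensuring that distinct $\sigma$ yield distinct limits, since later perturbations might undo earlier differences. I would handle this with an explicit separation bound. Require at stage $n$ that
\[
\bigl|p_n(x_0)-p_{n-1}(x_0)\bigr|\ge \tfrac12\,\tau_n,
\]
where $\tau_n$ is the radius used at stage $n$. Arrange also that all later changes satisfy $\sum_{m>n}\rho_m<\tfrac18\tau_n$; this can be done by shrinking the subsequent $\rho_m$ by an extra factor depending on $\tau_n$, as is already done with the $\eta$'s in the definition of $\rho_m$.

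Then take two sequences $\sigma\neq\sigma'$, and let $n$ be the first index where they differ. The two constructions coincide through stage $n-1$, since all choices can be made by a fixed selection rule depending only on the past and on $\sigma$. At stage $n$ the values at $x_0$ differ by at least $\tau_n/2$ in real part, because the two choices land in opposite half-planes. The tails change each value by less than $\tau_n/8$. Hence $f_\sigma(x_0)\neq f_{\sigma'}(x_0)$.

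Every other condition (a)--(h) is untouched, because the perturbation still has the form $tH_{S_n}$ with $t$ outside $\mathcal E$ and $|t|C_K<\rho_n/3$. Hence every $f_\sigma$ satisfies \thmref{thm:main} and \corref{cor:noncritical}, and $\sigma\mapsto f_\sigma$ is injective. This gives at least $\mathfrak c$ functions, completing the proof.
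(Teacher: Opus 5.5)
Your upper bound is the same as the paper's. Your lower bound takes a different route. The paper fixes $z_0\in\C\setminus\R$, starts from $p_0\equiv w$, and uses $(z-z_0)H_S$ in place of $H_S$, so every perturbation vanishes at $z_0$. Then $f_w(z_0)=w$ holds exactly, and injectivity of $w\mapsto f_w$ is immediate, with no tail estimate and no need for a coherent selection rule across branches. Your binary-tree scheme leaves the lemmas untouched and only steers the choice of $t$. That is a legitimate alternative, but the separation step as written does not hold, for two reasons.

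\emph{First, step (iv) can fail.} You have $p_n(x_0)-p_{n-1}(x_0)=c_n+tH_{S_n}(x_0)$, where $c_n:=q_n^{(2)}(x_0)-p_{n-1}(x_0)$ is already fixed by the first two steps of stage $n$. The admissible $t$ only give $|tH_{S_n}(x_0)|<\rho_n|H_{S_n}(x_0)|/(3C_K)$, with $C_K=\normone{H_{S_n}}{D_n}$. The ratio $|H_{S_n}(x_0)|/C_K$ can be arbitrarily small, since points of $A$ near $x_0$ enter $S_n$. In that case $\Real{c_n}$ dictates the sign of $\Real{p_n(x_0)-p_{n-1}(x_0)}$, whatever $\sigma_n$ is.

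\emph{Second, the claimed separation does not follow.} Even when the signs can be prescribed, opposite signs of the real parts together with $|p_n(x_0)-p_{n-1}(x_0)|\ge\tau_n/2$ give no lower bound on the difference of real parts, because the modulus may lie almost entirely in the imaginary part. Also, if $\tau_n$ is the radius in $t$-space, then $\tau_n/2$ is on the wrong scale for values at $x_0$. So the claim that the values at $x_0$ differ by at least $\tau_n/2$ in real part is unjustified.

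\emph{The repair.} Compare the two branches directly: they share $q_n^{(2)}$, so at $x_0$ they differ by $(t-t')H_{S_n}(x_0)$.
\begin{enumerate}
\item Set $\tau_n:=\rho_n|H_{S_n}(x_0)|/(3C_K)$, which is positive because $x_0\notin A\supseteq S_n$.
\item Require $|t|<\rho_n/(3C_K)$ and $(-1)^{\sigma_n}\Real{tH_{S_n}(x_0)}>\tau_n/4$. This set of $t$ is open and nonempty (it contains $(-1)^{\sigma_n}\tau_n/(2H_{S_n}(x_0))$), so it meets the complement of the null set $\mathcal E$. The two branches then differ at $x_0$ by more than $\tau_n/2$ in real part.
\item Take $x_0\in(-1,1)\setminus A$, so that $x_0\in D_m$ and hence $|p_m(x_0)-p_{m-1}(x_0)|<\rho_m$ for every $m$. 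For an arbitrary real $x_0$, the changes at stages $m<|x_0|$ are not controlled.
\item Insert $\tau_j/8$ for $j<m$ into the minimum defining $\rho_m$. Each tail $\sum_{m>n}\rho_m$ is then below $\tau_n/16$, so $|\Real{f_\sigma(x_0)-f_{\sigma'}(x_0)}|>3\tau_n/8>0$.
\end{enumerate}
With these corrections your argument works, though it is noticeably heavier than the paper's exactly preserved invariant $f_w(z_0)=w$.
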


\begin{proof}
Fix $z_0\in\C\setminus\R$.  For each $w\in\C$, start the construction with
$p_0\equiv w$ and use
\[
    \widetilde H_S(z):=(z-z_0)H_S(z)
\]
in place of $H_S$ in the perturbations.  Since $x-z_0\neq0$ for $x\in\R$,
all denominators and derivative factors used in the proofs remain nonzero,
while every perturbation fixes the value at $z_0$.  The resulting function
$f_w$ satisfies $f_w(z_0)=w$, so distinct values of $w$ give distinct
functions.  Notice that the set of all entire functions has cardinality at
most that of the continuum, since each is determined by its Taylor
coefficients.
\end{proof}

\begin{proposition}\label{prop:several-sets}
Let $(A_j)_{j\geq1}$ be pairwise disjoint countable dense subsets of $\R$, and
let $(B_j)_{j\geq1}$ be pairwise disjoint countable dense subsets of $\C$.
There exists a transcendental entire function $f$ such that, for every
$j\geq1$,
\[
    f\!\restriction_{A_j}\colon A_j\to B_j \text{ is bijective},
    \qquad
    f^{-1}(B_j)\cap\R=A_j.
\]
The set of such functions has the cardinality of the continuum.
\end{proposition}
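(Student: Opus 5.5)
We run the same inductive construction as in the proof of \thmref{thm:main}, with countably many pairs handled by a diagonal enumeration. Let $A:=\bigcup_j A_j$ and $B:=\bigcup_j B_j$; these are countable, and every union of the $A_j$ is dense. Fix injective enumerations $A_j=\{\alpha_{j,1},\alpha_{j,2},\ldots\}$ and $B_j=\{\beta_{j,1},\beta_{j,2},\ldots\}$. At stage $n$ we process all pairs $(j,k)$ with $j,k\le n$. We construct polynomials $p_n$ and finite sets $S_n\subset A$, $T_n\subset B$. The invariants (a)--(h) are kept, and (b) is strengthened to: $p_n\restriction_{S_n}\colon S_n\to T_n$ is a bijection with $p_n(S_n\cap A_j)=T_n\cap B_j$ for every $j$. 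Only finitely many $j$ meet $S_n$.

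\emph{Inserting a source.} To insert a source $a=\alpha_{j,k}\notin S$, we use \lemref{lem:add-source} with $B$ replaced by $B_j$. Its proof uses only that the target set is dense, so it produces $b\in B_j\setminus T$. \emph{Inserting a target.} To insert a target $b=\beta_{j,k}\notin T$, we use \lemref{lem:add-target} with $A$ replaced by $A_j$. This needs $b\notin p(S)$, which holds because $p(S)=T$. It also needs $A_j\setminus(S\cup\{0\})$ to be unbounded, which follows from density. In both cases the new pair lies in $A_j\times B_j$, so the strengthened (b) is preserved.

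\emph{Clearing.} This is the key point and mildly the main obstacle. Apply \lemref{lem:clearing} with $E=T_n$ and $I=I_n$. This gives $p_n^{-1}(T_n)\cap I_n=S_n\cap I_n$ together with $p_n'\neq0$ on $S_n$. Then apply \lemref{lem:stability} with $E=T_n$ to obtain $\eta_n$. Since each point of $S_n$ maps into the correct $B_j$ by the strengthened (b), no index-specific clearing is needed.

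\emph{Passing to the limit.} The tail estimate from the proof of \thmref{thm:main} gives
\[
f^{-1}(T_N)\cap I_N=S_N\cap I_N .
\]
Let $x\in\R$ with $f(x)\in B_j$. Then $f(x)\in T_N$ for some large $N\ge|x|$, so $x\in S_N$, and the strengthened (b) puts $x\in A_j$. Conversely, every $a\in A_j$ is eventually in $S_N$ and is mapped into $B_j$. Every $b\in B_j$ eventually lies in $T_N$ and has its preimage in $S_N\cap A_j$. So $f\restriction_{A_j}\colon A_j\to B_j$ is bijective, and injectivity holds globally on $A$.

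Pairwise disjointness of the $A_j$ and the $B_j$ is used here. It makes the enumerations well defined. It also guarantees that a point already in $S$ or $T$ is never requested for a different index. Transcendence follows exactly as in \thmref{thm:main}: $B_1\subset f(\R)$, and $f(\R)$ is a countable union of planar-null sets.

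\emph{Cardinality.} Fix $z_0\in\C\setminus\R$ and start from $p_0\equiv w$. Use $\widetilde H_S(z)=(z-z_0)H_S(z)$ in all perturbations, as in \corref{cor:continuum}. Then $f_w(z_0)=w$, which gives continuum many distinct functions. The upper bound is the same as before: there are at most continuum many entire functions.
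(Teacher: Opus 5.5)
Your proposal is correct and is essentially the paper's proof. It uses the same ingredients: source and target insertions via \lemref{lem:add-source} and \lemref{lem:add-target} with $B_j$ and $A_j$ in place of $B$ and $A$; clearing and stability applied to the unions $S_n$, $T_n$; and disjointness of the $B_j$ in the limit to force a real preimage of a point of $B_j$ into $A_j$. The differences are only bookkeeping. You track $S_n\cap A_j$ instead of separate sets $S_{j,n}$. You also process all pairs with $j,k\le n$ at stage $n$ rather than one pair per stage, so $\rho_n$ must be split among those finitely many steps instead of into thirds.
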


\begin{proof}
Choose injective enumerations
\[
    A_j=\{\alpha_{j,1},\alpha_{j,2},\ldots\},
    \qquad
    B_j=\{\beta_{j,1},\beta_{j,2},\ldots\},
\]
and enumerate the pairs $(j,k)$.  In the construction used in the proof of
\thmref{thm:main}, keep finite sets $S_{j,n}\subset A_j$ and
$T_{j,n}\subset B_j$, only finitely many of which are nonempty, and require
the polynomial at stage $n$ to map each $S_{j,n}$ bijectively onto
$T_{j,n}$.

When the pair $(j,k)$ is considered, apply \lemref{lem:add-source} with $B_j$ in place of
$B$ to include $\alpha_{j,k}$.  If $\beta_{j,k}$ has not yet been included in
$T_{j,n}$, then it belongs to none of the sets $T_{\ell,n}$, since
$T_{\ell,n}\subset B_\ell$ and the sets $B_\ell$ are pairwise disjoint.  It
is therefore not in the image of $\bigcup_{\ell\geq1} S_{\ell,n}$, and \lemref{lem:add-target}
applies with $A_j$ in place of $A$.  \hyperref[lem:clearing]{Lemmas~\ref*{lem:clearing}} and \hyperref[lem:stability]{\ref*{lem:stability}} are applied to
\[
    S_n=\bigcup_{j\geq1}S_{j,n},
    \qquad
    T_n=\bigcup_{j\geq1}T_{j,n}.
\]
The estimates and the convergence argument are unchanged.  Since every pair
$(j,k)$ is eventually considered, the restriction of the limit function to
$A_j$ is a bijection onto $B_j$ for every $j$.

Suppose that $x\in\R$ and $f(x)\in B_j$.  Choose $N$ so that $x\in I_N$ and
$f(x)\in T_{j,N}$.  As in the proof of \thmref{thm:main},
\[
    f^{-1}(T_N)\cap I_N=S_N\cap I_N.
\]
Thus $x\in S_{\ell,N}$ for some $\ell$.  Every later polynomial agrees with
$p_N$ on $S_N$, so
\[
    f(x)=p_N(x)\in T_{\ell,N}.
\]
Since also $f(x)\in T_{j,N}$, pairwise disjointness gives $\ell=j$.  Hence
$f^{-1}(B_j)\cap\R\subset A_j$, and the reverse inclusion follows from
$f(A_j)=B_j$.

Since $B_1\subset f(\R)$, the proof of transcendence in
\thmref{thm:main} applies and also gives $\overline{f(\R)}=\C$.  The
modification in the proof of \corref{cor:continuum} applies without
change and shows that the set of such functions has the cardinality of the
continuum.
\end{proof}

\begin{proposition}\label{prop:carrier}
Let $M\subset\C$ be closed, unbounded, and of planar Lebesgue measure zero.
Let $A\subseteq M$ be countable and dense in $M$, and let $B\subset\C$ be
countable and dense.  There exists a transcendental entire function $f$ such
that
\[
    f\!\restriction_A\colon A\to B \text{ is bijective},
    \qquad
    f^{-1}(B)\cap M=A.
\]
The set of such functions has the cardinality of the continuum.
\end{proposition}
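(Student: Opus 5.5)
The plan is to rerun the construction in the proof of \thmref{thm:main} with $\R$ replaced by $M$, $I_n$ replaced by the compact sets $M_n:=M\cap D_n$, and all finite sets $S$ now finite subsets of $M$ (hence of $\C$, no longer of $\R$). First, \lemref{lem:add-source} goes through verbatim for $a\in M\setminus S$, since it only used $H_S(a)\neq0$ and the density of $B\setminus T$ in $\C$. For \lemref{lem:add-target} I would pick $a\in A\setminus(S\cup\{0\})$ with $|a|$ large. This is possible because $M$ is closed and unbounded and $A$ is dense in $M$, so $A$ is unbounded. The estimate $c_a\to0$ as $|a|\to\infty$ uses only $|H_S(a)a^M|\sim|a|^{h+M}$ and $|b-p(a)|=O(|a|^{\max\{d,0\}})$, and both hold for complex $a$.

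The main obstacle is the replacement for \lemref{lem:clearing}: I need $p_t=q+tH_S$ with $p_t^{-1}(E)\cap M_n=S\cap M_n$. Previously the bad parameter set $\Phi_e(I\setminus S)$ was null because it is a $C^1$ image of a one-dimensional set. Now I would use that $\Phi_e(z)=(e-q(z))/H_S(z)$ is holomorphic, hence locally Lipschitz, on $\C\setminus S$. The set $M_n\setminus S$ is a countable union of compact sets on which $\Phi_e$ is Lipschitz, and Lipschitz maps of $\C$ into $\C$ send planar-null sets to planar-null sets. So $\mathcal B=\bigcup_{e\in E}\Phi_e(M_n\setminus S)$ has $m_2(\mathcal B)=0$, and this is exactly where the measure-zero hypothesis on $M$ enters. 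The critical-parameter exclusions $t_s$ and the choice of a small $t\notin\mathcal E$ are unchanged.

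For \lemref{lem:stability} I would work on the compact set $M_n$ rather than an interval. Monotonicity of $\Real{\lambda_s g}$ along a line is no longer available, so I would use a local injectivity argument for holomorphic maps instead. The functions $g=f$ and $p=p_n$ are entire, so I would take the stability norm on a closed disc $\overline D(0,n+1)$. Fix $s\in S\cap M_n$ with $p'(s)\neq0$, and choose a convex disc $U_s=D(s,\delta_s)$ on which $\Real{\lambda_s p'}\ge 2c_s$. If $\sup_{U_s}|g'-p'|<c_s$, then $\Real{\lambda_s g'}>0$ on $U_s$, and integrating along segments shows $g$ is injective on $U_s$. Since $g(s)=p(s)$, the point $s$ is then the only preimage of $p(s)$ in $U_s$. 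Shrinking $\delta_s$ so that $g(U_s)$ meets $E$ only in $p(s)$ follows exactly as before. On the compact remainder $L=M_n\setminus\bigcup_s U_s$, I get $\dist(p(L),E)>0$ and conclude as before. So \lemref{lem:stability} holds with $C^1$ closeness on a neighborhood disc. The construction's condition (h) already controls $\normone{p_m-p_{m-1}}{D_m}$ with $D_{n+1}\subset D_m$ for $m>n$, so I would define $\eta_n$ relative to $D_{n+1}$ and run the same tail estimate.

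Convergence, the bijection $A\to B$, and $f^{-1}(B)\cap M=A$ then follow word for word. For transcendence: if $f$ were a polynomial, $f(M)$ would be closed, because $M$ is closed and $|f|\to\infty$. It would also contain the dense set $B$, hence equal $\C$. But $f(M)$ is null, since $f$ is locally Lipschitz and $m_2(M)=0$, which is a contradiction. For the continuum of such functions, I would fix $z_0\notin M$, which exists since $M$ is null, and use $\widetilde H_S=(z-z_0)H_S$ as in the proof of \corref{cor:continuum}.
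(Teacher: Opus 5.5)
Your proposal is correct and follows the paper's proof essentially step for step. It uses the same substitution of $M\cap D_n$ for $I_n$, the same Lipschitz-image argument making the bad parameter set in the clearing lemma planar-null, the same segment-integral injectivity argument (positivity of $\Real{\lambda_s g'}$ on a convex set) for stability, and the same transcendence and $z_0\notin M$ arguments. The only difference is cosmetic: you measure $C^1$ closeness on $\overline D(0,n+1)$ so that full discs $D(s,\delta_s)$ fit, whereas the paper stays on $D_n$ and integrates over the convex sets $\Delta_s\cap D_n$.
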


\begin{proof}
Set $X_n=M\cap D_n$ and use $X_n$ in place of $I_n$ in the proof of
\thmref{thm:main}.  The set $A$ is unbounded.  The proof of \lemref{lem:add-source} applies without change
for finite $S\subset M$.  For \lemref{lem:add-target} choose $L\geq0$ so that
$|S|+L>\max\{\deg p,0\}$.  Then
\[
    \frac{b-p(a)}{H_S(a)a^L}\longrightarrow0
    \qquad (|a|\to\infty,\ a\in A),
\]
so \lemref{lem:add-target} applies as well.

For \lemref{lem:clearing}, use the same perturbation $p_t=q+tH_S$.  For $e\in E$ and $m\geq1$, the set
\[
    Y_{n,m}=\{z\in X_n:|H_S(z)|\geq1/m\}
\]
is compact, and the function $z\mapsto(e-q(z))/H_S(z)$ is Lipschitz on it.
The image of $Y_{n,m}$ therefore has planar Lebesgue measure zero.  Since
\[
    X_n\setminus S=\bigcup_{m\geq1}Y_{n,m},
\]
the set of values of $t$ for which $p_t(z)=e$ for some
$z\in X_n\setminus S$ and some $e\in E$ has measure zero.  After excluding
the finitely many values for which $p_t'(s)=0$ at a point $s\in S$, one can
choose $t$ arbitrarily small.  This proves the analogue of \lemref{lem:clearing} on
$X_n$.

The conclusion of \lemref{lem:stability} also holds on $X_n$.  Suppose that
\[
    p^{-1}(E)\cap X_n=S\cap X_n,
    \qquad
    p'(s)\neq0\quad(s\in S\cap X_n).
\]
If $X_n=\varnothing$, there is nothing to prove.  If
$S\cap X_n=\varnothing$, use the positive distance between the compact set
$p(X_n)$ and $E$.  Otherwise, for each $s\in S\cap X_n$, choose
$\lambda_s\in\C$ with $|\lambda_s|=1$ and
$\lambda_sp'(s)=|p'(s)|$.  Choose pairwise disjoint closed discs $\Delta_s$
about these points and numbers $r_s>0$ so that
\[
    \Real{\lambda_sp'(z)}>0\quad(z\in\Delta_s\cap D_n),
    \qquad
    D\bigl(p(s),2r_s\bigr)\cap E=\{p(s)\},
\]
and
\[
    p(X_n\cap\Delta_s)\subset D\bigl(p(s),r_s\bigr).
\]
Let
\[
    L=X_n\setminus
    \bigcup_{s\in S\cap X_n}\operatorname{int}\Delta_s,
\]
where $\operatorname{int}\Delta_s$ denotes the interior of $\Delta_s$.
If $L\neq\varnothing$, then $p(L)\cap E=\varnothing$, so $p(L)$ has
positive distance from $E$.  Let $g$ be holomorphic on a neighborhood of
$D_n$, and suppose that $g$ agrees with $p$ on $S\cap X_n$ and is
sufficiently close to $p$ in the $C^1$ norm on $D_n$.  For
$z\in X_n\cap\Delta_s$ with $z\neq s$, we have
\[
    g(z)-g(s)=(z-s)\int_0^1g'\bigl(s+t(z-s)\bigr)\,dt\neq0,
\]
because the integral has positive real part after multiplication by
$\lambda_s$.  Uniform closeness and the choice of $r_s$ exclude the other
points of $E$ in these discs.  If $L\neq\varnothing$, uniform closeness
excludes points of $E$ on $L$.  Thus
\[
    g^{-1}(E)\cap X_n=S\cap X_n,
\]
and the same estimate gives $g'(s)\neq0$ for $s\in S\cap X_n$.

The remainder of the proof and all estimates are unchanged.  They give an
entire function $f$ for which $f\!\restriction_A$ is a bijection onto $B$ and
$f^{-1}(B)\cap M=A$.  The function $f$ is nonconstant because $f(A)=B$ and
$B$ is dense in $\C$.  If $f$ were a polynomial, then $f(M)$ would be
closed, since $|f(z)|\to\infty$ as $|z|\to\infty$.  As $B\subset f(M)$, this
would give $f(M)=\C$.  On the other hand,
\[
    M=\bigcup_{n\geq1}X_n,
\]
and $f$ is Lipschitz on each $D_n$.  Thus $f(M)$ has planar Lebesgue measure
zero, a contradiction.  Since $B=f(A)\subset f(M)$, we also have
$\overline{f(M)}=\C$.

To see that the set of such functions has the cardinality of the continuum,
choose $z_0\in\C\setminus M$ and make the modification used in the proof of
\corref{cor:continuum}.  The factor
$z-z_0$ does not vanish on $M$, so all the preceding arguments remain valid,
and the resulting functions satisfy $f_w(z_0)=w$.
\end{proof}

\section*{Acknowledgments}
We thank Alexandre Eremenko for a careful reading of the manuscript and for
helpful comments and suggestions.

\end{document}